\def\1{^{-1}}
\newtheorem{theorem}{Theorem}[section]
\newtheorem{lemma}[theorem]{Lemma}
\theoremstyle{definition}
\newtheorem{corollary}[theorem]{Corollary}
\theoremstyle{remark}
\numberwithin{equation}{section}
\begin{document}

\title[]{On vanishing of all fourfold products of the Ray classes in symplectic cobordism.}
\begin{abstract}
This note provides certain computations with transfer associated with projective bundles of Spin vector bundles.
One aspect is to revise the proof of the main result of \cite{B} which says that all fourfold products of the Ray classes are zero in symplectic cobordism.   
\end{abstract}
 
\author{Malkhaz Bakuradze }
\address{Faculty of exact and natural sciences, A. Razmadze Math. Institute, Iv. Javakhishvili Tbilisi State University, Georgia }
\email{malkhaz.bakuradze@tsu.ge } 

\maketitle

\section{Introduction}

The Ray classes \cite{R} $\phi_i\in MSp_{8i-3}$ are indecomposable torsion elements of order two in symplectic bordism ring.  $\phi_i$ arise from the expansion of Conner-Floyd symplectic 
Pontryagin class

\begin{equation*}
pf_1((\eta^1-\mathbb{R})\otimes_{\mathbb{R}}(\zeta-\mathbb{H}))=
s\sum_{i\geq 1}\theta_ipf^i_1(\zeta)
\end{equation*}
in $MSp^4(S^4 \wedge BSp(1))$,  where $s$ is the generator of $MSp^1(S^1)=\mathbb{Z}$, $\eta^1 \to S^1$ is the non-trivial real line bundle and $\zeta \to BSP(1)$ is the canonical $Sp(1)$ bundle. The notation $$\theta_{2i}=\phi_i$$ 
is used in the literature because   $\theta_{2i+1}=0$, for $i>1$ \cite{Ro}. 

\medskip

The classes $\phi_i$ play an essential role in the torsion of the symplectic cobordism ring \cite{R,G,V1}.

By \cite{G, GR} one has $\theta_1\phi_i\phi_j=0$ and $\phi_i^{2i+3}=0$. By \cite{V1} most ternary products $\phi_i\phi_j\phi_k$ are nonzero.

\bigskip

In (\cite{B}, Prop. 4.1) we proved the following 

\begin{theorem}
	\label{main}
	i) All fourfold products of the Ray classes $\phi_i\phi_j\phi_k\phi_l$ are zero;
	
	ii) The images of all double products $\phi_i\phi_j$ in self-conjugate cobordism are zero.
\end{theorem}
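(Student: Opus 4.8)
\section*{Proof strategy}

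The plan is to reduce both parts to a single divisibility statement about double products of Ray classes, and then to prove that statement by a transfer computation for the real projective bundle of a Spin vector bundle.

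\emph{Reduction.} I would first recast the Ray classes as coefficients of a generating function. Forming the external product of two copies of the defining identity
\begin{equation*}
pf_1\big((\eta^1-\mathbb R)\otimes_{\mathbb R}(\zeta-\mathbb H)\big)=s\sum_{i\ge1}\theta_i\,pf_1^i(\zeta)
\end{equation*}
over $(S^1\wedge BSp(1))^{\wedge2}$, with two independent copies $\zeta_1,\zeta_2$ of the canonical $Sp(1)$-bundle, gives a class $\Psi\in\widetilde{MSp}^{8}\big((S^1\wedge BSp(1))^{\wedge2}\big)$ whose coefficient in front of $pf_1^i(\zeta_1)\,pf_1^j(\zeta_2)$ is, up to sign and the two $s$-factors, $\theta_i\theta_j$; since $\theta_{2m}=\phi_m$ and $\theta_{2m+1}=0$ for $m>1$, every double product $\phi_i\phi_j$ occurs among these coefficients. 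I claim it is enough to prove
\begin{equation*}
\phi_i\phi_j\in\theta_1^{\,2}\cdot MSp_*\qquad\text{for all }i,j.\tag{$\ast$}
\end{equation*}
Granting $(\ast)$, write $\phi_i\phi_j=\theta_1^2x_{ij}$ and $\phi_k\phi_l=\theta_1^2x_{kl}$; then $\phi_i\phi_j\phi_k\phi_l=\theta_1^4x_{ij}x_{kl}$, and $\theta_1^4=\theta_1^3\cdot\theta_1=0$ because $\theta_1$ is the image of the Hopf element $\eta$ in $MSp_1=\mathbb Z/2$ and $\theta_1^3=0$ since $MSp_3=0$. This is (i). For (ii), the image of $\phi_i\phi_j=\theta_1^2x_{ij}$ in self-conjugate cobordism $MSC_*$ lies in $\theta_1^2\cdot MSC_*$, which vanishes because $\theta_1^2$ vanishes in $MSC_*$; this is (ii). (As a check, $(\ast)$ also yields $\theta_1\phi_i\phi_j=\theta_1^3x_{ij}=0$ and $\phi_i^{\,2i+3}=\theta_1^{2i+2}x_{ii}^{\,i+1}\phi_i=0$, in agreement with \cite{G,GR}.)

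\emph{The transfer computation.} The real $4$-plane bundle $V_0=(\eta^1\otimes_{\mathbb R}\zeta)_{\mathbb R}$ over $S^1\times BSp(1)$ underlies a quaternionic line bundle, hence is Spin; therefore $V=V_0^{(1)}\oplus V_0^{(2)}$ over $B=(S^1\times BSp(1))^{\times2}$ is a rank $8$ Spin bundle, and, up to the $s$-factors, $\Psi$ is pulled back from $B$ and is built out of $MSp$-theoretic characteristic classes of $V$ together with $\zeta_1,\zeta_2$. Following the treatment of orientations of Spin bundles in \cite{GR}, I would realize the class representing $\phi_i\phi_j$ as $\pi_*$ of a class $\omega$ on the projective bundle $\pi\colon P=\mathbb{RP}(V)\to B$, where $\omega$ is assembled from the tautological line bundle $\lambda\to P$ and the pulled-back $\zeta_r$ via the Conner--Floyd expansion of $e_{MSp}(\lambda\otimes_{\mathbb R}\zeta_r)$ --- this expansion being precisely the displayed relation, now with $\lambda$ in the role of $\eta^1$. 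The fibre of $\pi$ is $\mathbb{RP}^7$, so $\chi(\mathbb{RP}^7)=0$ and the Becker--Gottlieb transfer $\tau_\pi\colon\Sigma^\infty_+B\to\Sigma^\infty_+P$ satisfies $\pi_*\circ(\tau_\pi)_*=\chi(\mathbb{RP}^7)\cdot\mathrm{id}=0$. Decomposing $\omega$ into the part lying in the image of $(\tau_\pi)_*$ --- which $\pi_*$ annihilates --- and a remainder, and computing the latter by a residue/Gysin argument that uses the splitting principle for the Spin bundle $V$ together with the $2$-torsion relations in $\widetilde{MSp}^*(\mathbb{RP}^\infty)$ governing such pushforwards, one is left with $\theta_1^2$ times an explicit class of $MSp^*(B)$. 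Reading off the coefficient of $pf_1^i(\zeta_1)\,pf_1^j(\zeta_2)$ gives $(\ast)$; the same computation carried out in $MSC_*$ supplies directly the presentation of the image of $\phi_i\phi_j$ needed for (ii).

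\emph{Main obstacle.} The hard part is the residue/Gysin step: one must pin down how the Spin structure on $V$ interacts with the Becker--Gottlieb transfer of $\mathbb{RP}(V)\to B$ and with the Conner--Floyd classes $e_{MSp}(\lambda\otimes_{\mathbb R}\zeta_r)$, and verify that the surviving term is divisible by $\theta_1^2$, not merely by $\theta_1$; it is this divisibility by the \emph{square} that forces the fourfold products to vanish and the double products to die in $MSC_*$, and it is the point at which the argument of \cite{B} is to be revised. The remaining ingredients --- the generating-function bookkeeping, the vanishing $\chi(\mathbb{RP}^{2n-1})=0$, and the facts $\theta_1^3=0$ in $MSp_*$ and $\theta_1^2=0$ in $MSC_*$ --- are standard.
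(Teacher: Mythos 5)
Your proposal does not follow the paper's route, and as it stands it has a genuine gap at its core. The entire weight of your argument rests on two things you do not establish: the divisibility claim $(\ast)$, namely $\phi_i\phi_j\in\theta_1^2\cdot MSp_*$, and the transfer/Gysin computation that is supposed to prove it. Note that $(\ast)$ is strictly \emph{stronger} than the theorem itself and is nowhere in the literature you invoke; you yourself flag the verification of divisibility by $\theta_1^2$ rather than $\theta_1$ as "the main obstacle," which is to say the proof of the key step is missing. Moreover the mechanism you sketch for it does not do what you claim. The Becker--Gottlieb identity for $\pi\colon P(V)\to B$ with fibre $RP^7$ gives $Tr^*\pi^*=\chi(RP^7)\cdot\mathrm{id}=0$, i.e.\ the transfer kills classes \emph{pulled back from the base}; your class $\omega$ is built from the tautological line bundle $\lambda$ on $P(V)$ and is not of that form, so nothing follows about it, and "decomposing $\omega$ into the part lying in the image of $(\tau_\pi)_*$" conflates the stable map, its effect in homology, and the cohomological Gysin pushforward. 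Worse, the pushforward $\pi_*$ you want to use does not even exist in $MSp$ until one proves $MSp$-orientability of the tangent bundle along the fibres; identifying that bundle (Lemma \ref{main lemma}: $\mathcal T_F(\xi)=\pi^*(\xi^7)$), producing the equivariant vector fields (Lemma \ref{field}, Corollary \ref{spin6}), invoking the Gorbunov--Ray orientability of $Spin(5)$-bundles, and controlling the transfer by Feshbach's double coset formula (Lemma \ref{3-components}) is precisely the content of the paper that your sketch replaces by the single remark $\chi(RP^7)=0$, which is not sufficient: the decisive vanishing in the paper, $Tr^*f^*=0$ for the $RP^2$-bundle $BN\to BSp(1)$ (Lemma \ref{relevant}, $m=2$), is for a fibre with $\chi=1$ and cannot come from an Euler-characteristic count at all.

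There is a second unsupported step in your deduction of (ii): you need $\theta_1^2=0$ in self-conjugate cobordism, which you assert without argument; the paper never uses such a statement, but instead gets (ii) from the $SC$-orientability of $\lambda\otimes_{\mathbb R}(\Lambda_1+\Lambda_2)$ (\eqref{SC}, from \cite{B}, Lemma 4.5) together with the same transfer vanishing. For comparison, the paper's actual mechanism is quite different from your divisibility reduction: the $MSp$-orientable bundle $\lambda\otimes_{\mathbb R}\sum_1^4\Lambda_i$ over $B\mathbb Z/2\times BSp(1)^4$ acquires a trivial summand when pulled back along $(f,p)\times 1$ to $BN\times BSp(1)^3$, hence has zero Euler class there; expanding that Euler class yields the identity \eqref{2}, whose "error terms" all factor through $f^*$, and applying the transfer of $p$ together with $Tr^*f^*=0$ kills them while retaining the coefficients $\phi_i\phi_j\phi_k\phi_l$, which therefore vanish. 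If you want to salvage your approach you would have to either prove $(\ast)$ (a new and stronger result, requiring exactly the kind of orientability and double-coset analysis above) and separately justify $\theta_1^2=0$ in $SC_*$, or else abandon the reduction and argue directly with an Euler-class identity plus a genuine transfer vanishing, as the paper does.
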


\medskip

In this note we revise the proof of Theorem \ref{main} as follows.
 In \cite{B} the arguments of Remark 1.11, Lemma 1.12, and the proof of Proposition 1, (1.1) and (1.2), case $m=5$ are inherited from the references and don't seem to be true. Still, these statements seem to be the consequences of Theorem 3.1 in \cite{BF}. However, all these points are used to derive the proof of Proposition 1 of \cite{B}, which we cover in Section 3. To do this, we first carry out some calculations with transfer in symplectic cobordism by using only double coset formula of \cite{Fe}.  For the reader's convenience, in Section 4 we briefly recall the proof of Theorem \ref{main} by pointing to the sequence of necessary propositions of \cite{B}.

\section{Preliminaries}

Recall from \cite{ABS} the groups $Spin(n)$ and $Pin(n)$  that operate on $\mathbb{R}^n$ by vector representation.

We will use an octonionic representation of Clifford algebra $Cl(8,0)$ and  refer to \cite{SC}. 

One has the isomorphism of Clifford algebras 

\begin{align}
\label{cl}
Cl^0(q + 1,p)\simeq  Cl(p,q)\simeq Cl^0(p,q + 1)
\end{align}
obtained from extending
$$e_1e_{k+1} \leftarrow e_k \rightarrow e_ke_{n+1}, \,\,\,(1 \leq k \leq n).$$ 

This defines the inclusions of $Pin(n)=Pin^0(n)+Pin^1(n)$ in $Spin(n+1)$, where $Pin^0(n)=Spin(n)$.

Let $\{e_0,e_1,\cdots e_7\}$ be an orthonormal basis of $V=\mathbb{R}^8$ . Note that we choose indices  ranging from $0$ to $7$. The octonionic algebra $\mathbb{O}$ is assumed to be given with basis $\{i_0,i_1, \cdots, i_7\}$ obeying the multiplication table  
\begin{align*}
	&i_0 = 1,\,\,\, i_k^2 =-1,\,\,\, i_ki_l = i_m =-i_li_k,\,\,\,1 \leq k \leq 7, \,\,\,\text { and cyclic for }\\
	&  (k,l,m) \in P=\{(1,2,3),(1,4,5),(1,6,7),(2,6,4),(2,5,7),(3,4,7),(3,5,6)\}.
\end{align*}


One can identify $V$ with $\mathbb{O}$ as vector spaces by $\sum x^ke_k\to \sum x^ki_k$.

An octonionic representation $Cl(8,0) \to M_2(\mathbb{O})$ is given by 
\[ \Gamma_k=\gamma_8(e_k)=\left( \begin{array}{cccc}
0 &i_k\\
i_k^* &0 \end{array} \right), \,\,\, 0\leq k \leq 7.\]

\[ \Rightarrow \gamma_8(x)=\left( \begin{array}{cccc}
0 &x\\
x^* &0 \end{array} \right), \,\, x\in V.\]

The carrier space of the representation is understood to be $\mathbb{O}^2$, i.e., the set of columns of two octonions, with $\gamma_8$ acting on it by left multiplication. 

Restricting the representation $Cl(8,0)$ to $Cl^0(8,0)=Cl^0(0,8)$ produces a faithful representation with the generators 

\[ \Gamma_0 \Gamma_k=\gamma_8(e_k)=\left( \begin{array}{cccc}
i_k &0\\
0 &-i_k \end{array} \right) , \,\,\, 1\leq k \leq 7. \]

So $Cl^0(0,8)$ is represented by diagonal matrices. This representation decomposes into two irreducible representations given by the two elements on the diagonal. By the isomorphism $Cl^0(0,8)=Cl(0,7)$ these two are also irreducible representations $Cl(0,7)\to \mathbb{O}$.

Let 
$$\gamma_7:Cl(0,7))\to \mathbb{O}$$
be the irreducible representation given by the generators
\begin{align}
	\label{Cl_7}
	&\gamma_7(e_k)=i_k,\,\,\, 1\leq k\leq 7,\\
	\Leftrightarrow \,\,\,&\gamma_7(x)=Im x, \,\,\, x\in Im \mathbb{O},
\end{align}
which act by successive left multiplication on the carrier space $\mathbb{O}$. 

Orthogonal transformations 
are generated by unit vectors
$u\in Im \mathbb{O}$:
$$x'=\phi_{(\gamma_7 (u)}(x)=u x u^{-1}=-uxu,\,\,\,x\in \mathbb{O}.$$ 

\medskip

By the isomorphism $Cl^0(0,7)\simeq Cl(0,6)$, we obtain a faithful and irreducible representation  $$\gamma_6: Cl(0,6) \to \mathbb{O}:$$
\begin{align}
	\label{Cl_6}
	&\gamma_6(e_k)=i_ki_7,\,\,\, 1\leq k\leq 6,\\
	&\gamma_6(u)=ui_7,\,\, u\in \mathbb{R}^6.
\end{align}

Orthogonal transformations  
are generated by 
\begin{align*}
	x'=&i_k(i_7xi_7)i_k=i_k((i_7xi_7)i_k),\,\,\,x\in \mathbb{O}.
\end{align*}


\bigskip

Using \eqref{cl} for $n=6,5,4,3$ one can see how $Spin(n)$ operates in $R^n$ identified with the imaginary subspace of $\mathbb{O}$ with vanishing $n+1,\cdots, 7$-components.

\section{Spin bundles}

The following bundles induced by the inclusion of groups
\begin{align}
	\label{Sm-1}
	&i_{m}:BSpin(m)\to BSpin(m+1),\\
	\label{Pm-1}
	&j_{m}:BPin(m)\to BSpin(m+1)
\end{align}
can be considered as the sphere bundle and the projective bundle of the universal $Spin(m)$ bundle 
\begin{equation}
\label{xi}
\xi^m \to BSpin(m)
\end{equation}
respectively. 

Denote the sphere bundle and the projective bundle of a vector bundle $\xi$ by $S(\xi)$ and $P(\xi)$ respectively. In particular we have $$S(\xi^m)=BSpin(m),\,\,\,P(\xi^m)=BPin(m)$$ 
and the pullback bundles induced by inclusion  $Spin(m) \hookrightarrow Spin(m+1)$, 
\begin{align}
	&S(\xi^m\oplus 1)\to BSpin(m),\\
	\label{P}
	&P(\xi^m\oplus 1)\to BSpin(m).
\end{align}

\medskip

\begin{lemma}
\label{main lemma}
Let $\xi^{7}\to BSpin(7)$ be the universal $Spin(7)$ bundle as above and
let 
$$\xi=1+\xi^{7}.
$$  
Let $\pi: P(\xi)\to BSpin(7)$ be the projective bundle of $\xi$:
\begin{equation*}
\label{pi}
P(\xi)=ESpin(7)\times_{Spin(7)} RP^7
\end{equation*} 
and let $\mathcal{T}_F(\xi)$ be the tangent bundle along the fibers of $\pi$. Then
$$ \mathcal{T}_{F}(\xi)=\pi^*(\xi^{7}).$$
\end{lemma}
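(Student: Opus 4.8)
I would reduce the lemma, via the standard description of the vertical tangent bundle of a real projective bundle, to an equivariant statement about octonions handled by the model of Section~2. Recall: for a Euclidean vector bundle $V\to B$ with projectivization $\rho: P(V)\to B$, tautological real line sub-bundle $L\subset\rho^*V$ and orthogonal complement $L^\perp$ (so $\rho^*V=L\oplus L^\perp$), there is a canonical isomorphism $\mathcal{T}_F(V)\cong\mathrm{Hom}(L,\rho^*V/L)\cong\mathrm{Hom}(L,L^\perp)$. Taking $V=\xi$ over $B=BSpin(7)$, it remains to produce over $P(\xi)$ a bundle map $\pi^*(\xi^7)\to\mathrm{Hom}(L,L^\perp)$ that is a fibrewise isomorphism.

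The map comes from octonion multiplication. Identify the fibre of $\xi$ over a point of $BSpin(7)$ with $\mathbb{O}$, so that $\xi^7$ corresponds to $\mathrm{Im}\,\mathbb{O}$ with the vector representation of $Spin(7)$ and a point of $P(\xi)$ is a line $\ell\subset\mathbb{O}$, $L^\perp=\ell^\perp$. For $v\in\mathrm{Im}\,\mathbb{O}$ the left multiplication $\psi\mapsto v\psi$ — the Clifford multiplication by $v$ in the sense of \eqref{Cl_7} — sends $\ell$ into $\ell^\perp$, since $\langle v\psi,\psi\rangle=|\psi|^2\,\mathrm{Re}\,v=0$ for $\psi\in\ell$. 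Hence
\[
\Psi: \underline{\mathrm{Im}\,\mathbb{O}}\longrightarrow\mathrm{Hom}(L,L^\perp),\qquad v\longmapsto\bigl(\psi\mapsto v\psi\bigr),
\]
is well defined on each fibre $RP^7$ — it uses no choice of representative of $\ell$, being $\mathbb{R}$-linear in $\psi$ — and it is a fibrewise isomorphism, $\mathbb{O}$ being a division algebra and both sides having rank $7$. The essential point is that $\Psi$ is $Spin(7)$-equivariant: the defining relation between the vector representation and the Clifford-module action gives $g\cdot(v\psi)=(\rho(g)v)(g\cdot\psi)$, which is exactly the statement that $\Psi$ intertwines the $Spin(7)$-action on $\mathrm{Im}\,\mathbb{O}$ (i.e. on $\xi^7$) with that on $\mathrm{Hom}(L,L^\perp)$. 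Passing from $Spin(7)$-equivariant bundles over the fibre $RP^7$ to bundles over $P(\xi)=ESpin(7)\times_{Spin(7)}RP^7$, $\Psi$ yields the required isomorphism $\pi^*(\xi^7)\cong\mathcal{T}_F(\xi)$.

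As a cross-check one has the homogeneous-space form of the argument: by the octonionic representation $Spin(7)$ acts transitively on the unit sphere of $\mathbb{O}$ with stabiliser $G_2=\mathrm{Aut}\,\mathbb{O}$, so $RP^7=P(\mathbb{O})=Spin(7)/H$ with $H=G_2\times\{\pm1\}$, the central $-1$ acting as $-\mathrm{id}$ on $\mathbb{O}$ and hence trivially on $P(\mathbb{O})$. Then $P(\xi)\simeq BH$, $\mathcal{T}_F(\xi)$ is the bundle associated with the isotropy representation of $H$ on $T_{[1]}RP^7\cong\mathrm{Hom}(\mathbb{R}\cdot1,\mathrm{Im}\,\mathbb{O})\cong\mathrm{Im}\,\mathbb{O}$, and $\pi^*(\xi^7)$ with the restriction of the vector representation; both restrict on $G_2$ to its $7$-dimensional irreducible representation and on $\{\pm1\}$ to the trivial representation, so the two bundles coincide.

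The main obstacle is the step isolated above: not the rank count, but identifying the $Spin(7)$-action that $L,L^\perp$ actually carry in the octonionic model and, equivalently, the $Spin(7)$-equivariance of $\Psi$ — i.e. the identification of the isotropy representation with $\xi^7$ as a $G_2$-representation, not merely in rank. This is the compatibility of octonion multiplication with the vector representation of $Spin(7)$, a form of triality, which I would verify by a direct computation from the multiplication table, \eqref{Cl_7}, and the inclusions $Pin(n)\subset Spin(n+1)$ of Section~2.
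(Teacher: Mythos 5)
Your construction is in substance the same as the paper's: your $\Psi(v)(\psi)=v\psi$ is exactly the left--multiplication trivialization $v(x)=t_1i_1x+\cdots+t_7i_7x$ used there, and your fibrewise arguments (skewness of left multiplication by $v\in \mathrm{Im}\,\mathbb{O}$, injectivity from the division property, rank count) are fine. The gap is at the step you yourself isolate as essential, the $Spin(7)$--equivariance, and it comes from conflating two different $8$--dimensional actions of $Spin(7)$ on $\mathbb{O}$. The relation $g\cdot(v\psi)=(\rho(g)v)(g\cdot\psi)$ holds when $g\cdot$ is the Clifford--module (spinor) action, i.e.\ iterated left multiplication via $\gamma_7$. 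But the fibre of $\xi=1+\xi^{7}$ is $\mathbb{O}=\mathbb{R}i_0\oplus \mathrm{Im}\,\mathbb{O}$ carrying the conjugation action $x\mapsto uxu^{-1}$ of Section 2, i.e.\ the representation $1\oplus(\text{vector})$, and this is the action under which $P(\xi)=ESpin(7)\times_{Spin(7)}RP^7$ is formed; it is not the spin representation. Your cross-check makes the conflation visible: for the action the fibre actually carries, $Spin(7)$ is not transitive on the unit sphere (it fixes $\pm i_0$), so $P(\xi)\not\simeq B(G_2\times\{\pm 1\})$; the orbit structure is the three-type one of Lemma \ref{3-components} (a fixed point, spheres, and an $RP^6$ with isotropy $Pin(6)$), which is incompatible with your homogeneous-space description. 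For the conjugation action the relation you invoke is false, since conjugation by a unit octonion is not an algebra automorphism: taking $g$ the rotation by $\theta$ in the $(i_1,i_2)$--plane (which fixes $i_4,i_5,i_6$), $v=i_1$, $\psi=i_4$, one gets $g\cdot(v\psi)=i_5$ while $(\rho(g)v)(g\cdot\psi)=\cos\theta\, i_5-\sin\theta\, i_6$.

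Moreover the missing step cannot be recovered by choosing a cleverer intertwiner: at the point $\{\pm i_7\}$ of the fibre, whose isotropy group maps onto $O(6)$ (it is the $Pin(6)$ of Lemma \ref{3-components}), the fibre of $\mathcal{T}_F(\xi)$ is $\mathrm{Hom}\bigl(\mathbb{R}i_7,\ \mathbb{R}i_0\oplus\mathrm{span}(i_1,\dots,i_6)\bigr)\cong \chi\otimes(1\oplus\mathbb{R}^6)$, whereas the fibre of $\pi^*(\xi^{7})$ is $\chi\oplus\mathbb{R}^6$, where $\chi$ is the character on $\mathbb{R}i_7$; on an element reversing $i_7$ and $i_1$ and fixing the other units these have traces $-5$ and $3$, so they are non-isomorphic isotropy representations and no $Spin(7)$--equivariant isomorphism $\tau_F\cong RP^7\times\mathbb{R}^7_{\rho}$ of the kind $\Psi$ would provide can exist (the discrepancy, $4(\chi-1)$ after restriction to a suitable cyclic subgroup, even survives in $\widetilde{KO}$ of the corresponding stratum, so it is not cosmetic). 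What your argument does prove, correctly, is the analogous statement for the projectivised \emph{spinor} bundle $P(\Delta^7)\to BSpin(7)$, where the Clifford relation and the description $RP^7=Spin(7)/(G_2\times\{\pm1\})$ are the right ones; but $\Delta^7\neq 1+\xi^{7}$, so this is a different statement from the lemma. Note that this equivariance is precisely the delicate point in the conjugation model: the paper's explicit Moufang-identity computations (Lemma \ref{field}) verify equivariant sections only for the subgroups $Spin(k)$, $k\le 6$, which is what the later applications (Corollary \ref{spin6}, Lemma \ref{relevant}) actually rely on, so importing the Clifford--module relation wholesale does not close the argument.
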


\begin{proof}

Clearly $\phi_{\gamma_7 (u)}$ induces the action of $Spin(7)$ on $\mathbb{O}$, also 
on the real projective space
$$RP^7=\{ \{\pm x\}\big| x\in \mathbb{O}, |x|=1\}$$
and on the tangent bundle of $RP^7$:
$$
\tau_{F}=RP^7\times R^7=\{\pm(x,v(x))\big|v(x)=t_1i_1x+\cdots +t_7i_7x,\,\, t_1, \cdots , t_7 \in \mathbb{R}\}.
$$ 

 $Spin(7)$ acts trivially of on the line in $\mathbb{R}^8=\mathbb{O}$ directed by $i_0$. The action on pure octonions defines the universal $Spin(7)$ bundle $\xi^{7}$.

This defines  
$$
\mathcal{T}_{F}(\xi)=ESpin(7)\times_{Spin(7)}\tau_{F}
$$
and the bundle map
$$
ESpin(7)\times_{Spin(7)}\tau_{F}\to ESpin(7)\times_{Spin(7)} R^7=\xi^{7},
$$
which classifies $\pi^*(\xi^{7})$. 
\end{proof}

\medskip

It is well known that $RP^7$ is paralelizable, i.e., admits $7$ linearly independent tangent vector fields $(\{\pm p, \pm pi_{1}\}),\cdots , (\{\pm p, \pm pi_{7}\})$, where $i_k$ are the octonionic units.  
 
\begin{lemma}
	\label{field}
	  There are $7-k$ number $Spin(k)$-equivariant linearly independent tangent vector fields on $RP^7$, 
	namely $(\{\pm p, \pm pi_{k+1}\}),\cdots , (\{\pm p, \pm pi_{7}\})$, where $k=2,\cdots,6$.
\end{lemma}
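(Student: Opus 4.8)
The plan is to exhibit explicitly, for each $k$ with $2\le k\le 6$, the $7-k$ vector fields $v_j(x)=i_j x$ for $j=k+1,\dots,7$ on $RP^7$ and to check two things: first, that they remain linearly independent pointwise (this is inherited from the full parallelization of $RP^7$ recalled just above, since the seven fields $x\mapsto i_1x,\dots,i_7x$ are everywhere linearly independent, hence so is any subfamily); and second, that each of the chosen fields is equivariant for the $Spin(k)$-action described in Section~2. So the only real content is the equivariance claim, and everything hinges on writing the $Spin(k)$-action in a form that makes the computation mechanical.

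Concretely, I would recall from the discussion after \eqref{Cl_6} that, using the chain of isomorphisms \eqref{cl} for $n=6,5,4,3$, the group $Spin(k)$ acts on $\mathbb{O}=\mathbb{R}^8$ through products of the operators $x\mapsto i_a x$ with $a\in\{1,\dots,k\}$ together with the fixed conjugations by $i_7$, and that the copy of $\mathbb{R}^k$ it rotates is spanned by $i_1x_0,\dots,i_kx_0$ for a suitable base octonion, with the complementary directions $i_{k+1},\dots,i_7$ left untouched. The key computational step is then: for an orthogonal transformation $A$ lying in the image of $\gamma_k$ and any octonionic unit $i_j$ with $j>k$, one has $A(i_j x)=i_j (A x)$, i.e. left multiplication by $i_j$ commutes with the $Spin(k)$-action. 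This I would verify by reducing to generators of $Spin(k)$ and invoking the Moufang / alternative-algebra identities for $\mathbb{O}$ (in particular $i_a(i_j x)=\pm i_j(i_a x)$ up to the sign bookkeeping dictated by the multiplication table $P$, which is exactly what isolates the first $k$ imaginary units from the last $7-k$). Passing to the Borel construction $ESpin(k)\times_{Spin(k)}(-)$ then turns this pointwise commutation into the statement that $v_j$ descends to a well-defined section over $P(\xi)$, i.e. is $Spin(k)$-equivariant in the sense required.

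I expect the main obstacle to be the sign and index bookkeeping: the multiplication table $P$ is not symmetric under permuting indices, so "commuting past $i_j$" really means tracking exactly which generators $i_a$ ($a\le k$) anticommute versus fail to associate with a given $i_j$ ($j>k$), and one must confirm that for every admissible pair the discrepancy is an overall scalar $\pm1$ — harmless on $RP^7=\{\{\pm x\}\}$ — rather than a genuine rotation mixing the last $7-k$ coordinates. Organizing this by the decreasing chain $Spin(6)\supset Spin(5)\supset\dots\supset Spin(2)$ and checking the single new generator at each stage, exactly as the Clifford-algebra inclusions \eqref{cl} suggest, should keep the case analysis short. Linear independence, by contrast, is immediate from the parallelizability of $RP^7$ already quoted, so no extra work is needed there.
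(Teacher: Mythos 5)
Your overall strategy --- reduce to generators of $Spin(k)$, verify a commutation identity by Moufang-type manipulations, and get linear independence for free from the parallelization of $RP^7$ --- is the same as the paper's, and the linear-independence part is indeed immediate. But the identity you make everything rest on is not the right one, and it is false. First, the fields in Lemma \ref{field} are $p\mapsto pi_j$, i.e.\ \emph{right} multiplication by the octonionic units, matching the parallelization $(\{\pm p,\pm pi_1\}),\dots,(\{\pm p,\pm pi_7\})$ quoted before the lemma; you work instead with left multiplication $v_j(x)=i_jx$, and in $\mathbb{O}$ these are genuinely different fields, so the statement you would prove is not the one asserted. Second, and more seriously, your key step ``$A(i_jx)=i_j(Ax)$ for $A$ in the image of $Spin(k)$ and $j>k$, up to an overall scalar $\pm1$'' fails, and not merely by a uniform sign. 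Take $k=6$, $j=7$ and the generator $A=\phi_{\gamma_6(e_1e_2)}$, which by the formula displayed in the paper's proof acts on $p=\sum t_ni_n$ by negating the $i_1$- and $i_2$-coordinates. From the multiplication table, $i_7i_1=i_6$, so at $x=i_1$ one gets $A(i_7i_1)=A(i_6)=i_6$ while $i_7(Ai_1)=i_7(-i_1)=-i_6$; at $x=i_0$ both sides equal $i_7$. The discrepancy is therefore point-dependent: left multiplication by $i_7$ does not commute, even projectively, with this element of $Spin(6)$, and no bookkeeping over the table $P$ repairs this. Note also that even a uniform sign $-1$ would not be ``harmless'': on $RP^7$ only $(p,v)\sim(-p,-v)$ is identified, so $(gp,(gp)i_7)$ and $(gp,-(gp)i_7)$ are distinct tangent vectors at the same point, and Corollary \ref{spin6} needs honest equivariant sections of $\mathcal{T}_F$, not merely an invariant line field.

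What the paper actually does is different in precisely this respect: it writes the action of the generators in the sandwich form coming from \eqref{Cl_6}, namely $\phi_{\gamma_6(e_k)}(x)=i_k((i_7xi_7)i_k)$, applies the Moufang identities \eqref{m2} and \eqref{m3} to the expression in which the extra unit sits as a \emph{right} factor, and obtains $\phi_{\gamma_6(e_k)}(pi_7)=-\phi_{\gamma_6(e_k)}(p)\,i_7$ for a single reflection-type generator, hence $\phi_{\gamma_6(e_je_k)}(pi_7)=\phi_{\gamma_6(e_je_k)}(p)\,i_7$ for the even products which generate $Spin(6)$; the cases $k=5,4,3$ are treated the same way. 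This also shows a further flaw in your plan of ``checking the single new generator at each stage'': the single Clifford generators $e_a$ lie in $Pin$, not in $Spin$, and by the above they only satisfy the identity up to a sign that cancels exclusively for even products, so the verification must be organized around the elements $e_je_k$, as in the paper. To salvage your argument you would have to abandon the claimed commutation of the action with left multiplication and instead carry out the Moufang computation for the right-multiplication fields $p\mapsto pi_j$, generator pair by generator pair.
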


\begin{proof}

Let $k=6$ and let us check that the vector field $(\{\pm p, \pm pi_7\})$ on $RP^7$ is invariant under action of $Spin(6)\subset Cl^0_6$:
Using Moufang identities
\begin{align}
	\label{m1}
	&(xyx)z = x(y(xz));\\
	\label{m2}
	&z(xyx) = ((zx)y)x;\\
	\label{m3}
	&x(yz)x = (xy)(zx)
\end{align}
one has for $\{\pm p\}=\pm \{t_0i_0+t_1i_1+\cdots t_7i_7\}$
\begin{align*}
	\phi_{\gamma_6(e_k)}(p)&= i_k((i_7pi_7)i_k)
	=i_k(i_7(t_0i_0+t_1i_1+\cdots t_7i_7)i_7)i_k)\\
	&=i_k(\sum_{n\neq 0,7}(t_ni_n-t_0i_0-t_7i_7)i_k
	=\sum_{n\neq k,7}t_ni_n-t_ki_k-t_7i_7.\\
	\Rightarrow\phi_{\gamma_6(e_je_k)}(p)& =
	\sum_{n\neq j,k}t_ni_n -t_ji_j-t_ki_k;\\
	\Rightarrow\phi_{\gamma_6(e_k)}(i_7)&=-i_7;  \\
	\Rightarrow\phi_{\gamma_6(e_je_k)}(i_7)&=i_7;\\
\end{align*}

\begin{align*}
	\phi_{\gamma_6(e_k)}(pi_7)&=
	(i_ki_7)(pi_7)(i_7i_k) \\
	& =((i_ki_7)p)(i_7(i_7i_k)) && \text{ by \eqref{m3},  } x=i_ki_7,\,\, y=p,\,\,z=i_7                   \\
	&=-((i_ki_7)p)i_k   && \text{ as }  i_7^2=-1                     \\
	&=((i_7i_k)p)i_k    && \text{ as }  i_7i_k=-i_ki_7                       \\
	&=i_7(i_kpi_k)      &&\text{ by \eqref{m2}, }       x=i_k,\,\,p=y,\,\,z=i_7,                    \\
	&=i_7(i_k(t_0i_0+t_1i_1+\cdots t_7i_7)i_k)\\
	&=i_7(\sum_{n\neq 0, k}t_ni_n-t_0i_0-t_ki_k)\\
	&=(-\sum_{n\neq k,7}t_ni_n+t_ki_k+t_7i_7)i_7\\
	\Rightarrow &\phi_{\gamma_6(e_je_k)}(pi_7)=(\sum_{n\neq j, k}t_ni_n-t_ji_j-t_ki_k)i_7.\\
	\Rightarrow &\phi_{\gamma_6(e_je_k)}(pi_7)= \phi_{\gamma_6(e_je_k)}(p)i_7.
\end{align*}

%
%

Similarly for $k=5,4,3$.

\end{proof}

\medskip

\begin{corollary}
	\label{spin6}
	Let  $\xi^{k}$ be the universal $Spin(k)$ bundle, $k=2,\cdots ,6$.
	Then the tangent bundle along the fibers $RP^7$
	of the projective bundle 
	$$\tilde{\pi}:P(8-k+\xi^{k})\to BSpin(k),$$  
	 admits $(7-k)$ linearly independent sections 
	$$\mathcal{T}_{F}(\xi^{k}+8-k) =\tilde{\pi}^*(\xi^{k})+7-k.$$
\end{corollary}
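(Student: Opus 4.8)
The plan is to derive the corollary directly from Lemma~\ref{main lemma} and Lemma~\ref{field} by the standard associated-bundle (Borel construction) argument. The key observation is that everything in sight is built by applying $ESpin(k)\times_{Spin(k)}(-)$ to a $Spin(k)$-equivariant object over $RP^7$, and that this functor sends equivariant direct sums to direct sums and equivariant trivial summands to trivial summands.

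First I would identify the tangent bundle along the fibers of $\tilde\pi:P(8-k+\xi^k)\to BSpin(k)$. Exactly as in Lemma~\ref{main lemma}, the fiber of $\tilde\pi$ is $RP((8-k)+\mathbb{O}/\langle i_0\rangle\text{-type data})=RP^7$, and the tangent bundle along the fibers is the Borel construction on the tangent bundle $\tau_F=RP^7\times\mathbb{R}^7$ of $RP^7$ with its $Spin(k)$-action inherited from the inclusion $Spin(k)\hookrightarrow Spin(7)$ and the formula $x'=\phi_{\gamma_k(u)}(x)$. By the computation in Lemma~\ref{main lemma}, the bundle map $\tau_F\to RP^7\times\mathbb{R}^7$, $(x,v(x))\mapsto v(x)$, is $Spin(k)$-equivariant for the action on pure octonions defining $\xi^k$, so after the Borel construction we get $\mathcal{T}_F(\xi^k+8-k)=\tilde\pi^*(\xi^k)$ plus possibly a trivial correction — more precisely, since the new fiberwise-trivial summand $8-k$ contributes nothing new to the fiberwise tangent directions beyond what $RP^7$ already has, the fiberwise tangent bundle is still modeled on $\tau_F$, and one gets $\mathcal{T}_F(\xi^k+8-k)=\tilde\pi^*(\xi^k)+(7-k)$ once the trivial sections below are split off. (I would be careful here: the statement writes $\mathcal{T}_F=\tilde\pi^*(\xi^k)+7-k$, so the intended reading is that $\tilde\pi^*(\xi^k)$ has rank $k$ and the $(7-k)$ sections span a complementary trivial subbundle, consistent with $\dim RP^7=7=k+(7-k)$.)

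Next I would invoke Lemma~\ref{field}: it provides $7-k$ everywhere linearly independent $Spin(k)$-equivariant tangent vector fields on $RP^7$, namely $(\{\pm p,\pm p i_{k+1}\}),\dots,(\{\pm p,\pm p i_7\})$. Being $Spin(k)$-equivariant, each descends under $ESpin(k)\times_{Spin(k)}(-)$ to a genuine section of $\mathcal{T}_F(\xi^k+8-k)$; being pointwise linearly independent on each fiber $RP^7$, they remain so after the Borel construction, hence span a trivial subbundle of rank $7-k$. This gives a splitting $\mathcal{T}_F(\xi^k+8-k)\cong \mathcal{E}\oplus(7-k)$ for some rank-$k$ bundle $\mathcal{E}$. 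Finally, comparing with the identification of the first paragraph — the equivariant map $\tau_F\to RP^7\times\mathbb{R}^7\to\xi^k$-data — shows $\mathcal{E}\cong\tilde\pi^*(\xi^k)$, which is the asserted formula.

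The main obstacle I expect is bookkeeping rather than substance: checking precisely which $Spin(k)$-action $\tau_F$ carries when we pass from $\xi^7$ to $8-k+\xi^k$, and verifying that adding the trivial summand $8-k$ to $\xi^k$ genuinely reproduces (up to the action) the same model $RP^7$ and the same fiberwise tangent bundle as in Lemma~\ref{main lemma}, so that the $(7-k)$ fields of Lemma~\ref{field} are the correct ones and the residual rank-$k$ bundle is exactly $\tilde\pi^*(\xi^k)$ and not merely stably so. Once the equivariant identifications are pinned down, the rest is a formal application of the associated-bundle functor.
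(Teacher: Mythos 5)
Your proposal is correct and follows essentially the paper's own route: the paper obtains the formula by pulling back Lemma~\ref{main lemma} along $i_k\colon BSpin(k)\to BSpin(7)$, using $i_k^*(\xi^{7})=\xi^{k}+(7-k)$ and naturality of the tangent bundle along the fibers, and it points to Lemma~\ref{field} precisely as the alternative source of the $7-k$ sections --- exactly the two ingredients you combine. The only step you could drop is the final identification of the complement of the span of the Lemma~\ref{field} sections with $\tilde{\pi}^*(\xi^{k})$ (cancelling trivial summands is not automatic and is the one shaky point); it is not needed, because your first-paragraph identification --- equivalently the paper's pullback argument --- already gives $\mathcal{T}_{F}(\xi^{k}+8-k)=\tilde{\pi}^*(i_k^*\xi^{7})=\tilde{\pi}^*(\xi^{k})+(7-k)$ directly, and the $(7-k)$ sections then come for free from the trivial summand.
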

\begin{proof}

Apply Lemma \ref{main lemma}. For the standard inclusion 
$i_k:BSpin(k)\to BSpin(7)$ one has
$$i_k^*(\xi^{7})=\xi^{k}+7-k,$$
therefore
\begin{align*}
&\mathcal{T}_{F}(i_k^*(\xi^{7}+1))=i_k^*(\tilde{\pi}^*(\xi^{7}))\\
\Leftrightarrow \,\,\, &\mathcal{T}_{F}(\xi^{k}+8-k))=\tilde{\pi}^*(\xi^{k})+7-k.
\end{align*}

Alternatively one can apply Lemma \ref{field} to define $(7-k)$-sections of the tangent bundle along the fibers of $\mathcal{T}_F(i_k^*(\xi))$.

\end{proof}

%
%

%
%

\bigskip

  Let $Tr^*(i_{m-1})$ and $Tr^*(j_{m-1})$ be the transfer homomorphism of \eqref{Sm-1} and \eqref{Pm-1} respectively. Then by naturality of the transfer $i^*_{m}Tr^*(j_{m})$ is the transfer homomorphism of \eqref{P}.

 \begin{lemma} 
 	\label{3-components}
 	Let $2\leq m\leq 7$. The transfer homomorphism of \eqref{P} is the sum of three components, 
 	$$i^*_{m}Tr^*(j_{m})=Tr(j_{m-1})^*-Tr(i_{m-1})^*+Id.$$
 	This corresponds to the endpoints and the interior of the orbit type manifold 
 	$$Spin(m)\big|Spin(m+1)\big|Pin(m)$$
 	which is the line segment. 
 	The corresponding isotropy groups are: $Spin(m)$ at one endpoint,  $Pin(m-1)$ at another endpoint, and $Spin(m-1)$ for the points in the interior.  
 \end{lemma}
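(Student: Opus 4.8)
The statement is a direct application of the Feshbach double coset formula (cited as \cite{Fe}) to the inclusion $Spin(m)\hookrightarrow Spin(m+1)$, using that $Pin(m)$ is also a subgroup of $Spin(m+1)$ via the map $j_m$ of \eqref{Pm-1}. The plan is to identify the homogeneous space $Spin(m+1)/Pin(m)$ with $\mathbf{RP}^m$, decompose it into $Spin(m)$-orbits, and read off a transfer contribution from each orbit.

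First I would recall that the sphere $S^m$, on which $Spin(m+1)$ acts via the vector representation (identified in Section 2 with the unit vectors of $\mathbb{O}$ for $m=7$, and with unit vectors in the appropriate imaginary subspace for $m<7$), has isotropy group $Spin(m)$, so $Spin(m+1)/Spin(m)=S^m$; passing to the antipodal quotient gives $Spin(m+1)/Pin(m)=\mathbf{RP}^m$, which is the fibre of \eqref{Pm-1} and hence also of \eqref{P} after restriction along $i_m$. Now restrict the $Spin(m+1)$-action on $\mathbf{RP}^m$ to the subgroup $Spin(m)$. Choosing coordinates so that $Spin(m)$ fixes the last vector $e_m$ (in the notation of the excerpt, the line directed by $i_7$, resp.\ its lower analogues) and acts by the standard vector representation on the orthogonal $\mathbf{R}^m$, the orbits of $Spin(m)$ on $\mathbf{RP}^m$ are: the fixed point $[e_m]$, the fixed point $[e_0]$ (the class of the vector fixed by all of $Spin(m+1)$), and the one-parameter family of $Spin(m)$-invariant subspheres $\mathbf{RP}^{m-1}\subset\mathbf{RP}^m$ swept out as the ``latitude'' varies strictly between these two poles. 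Thus the orbit space is a closed interval, with the two endpoints singular and the interior a single manifold stratum $\mathbf{RP}^{m-1}\times(0,1)$ (here the generic orbit $\mathbf{RP}^{m-1}=Spin(m)/Spin(m-1)$ collapses to a point).

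Next I would compute the isotropy group of a point in each stratum. At the endpoint $[e_0]$, the whole of $Spin(m)$ fixes the point, so the isotropy is $Spin(m)$; this stratum contributes $Tr^*(i_{m-1})$ — the transfer of the inclusion $Spin(m)\hookrightarrow Spin(m+1)$ restricted suitably, i.e.\ $Tr(i_{m-1})^*$ in the notation of the lemma — but with a sign coming from the normal data, which I will come back to. At the other endpoint $[e_m]$, the stabilizer in $Spin(m)$ of the line through $e_m$ is the full preimage in $Spin(m)$ of $O(m-1)\subset O(m)$, which is exactly $Pin(m-1)$; this stratum contributes $Tr(j_{m-1})^*$. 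In the interior, the stabilizer of a generic point of $\mathbf{RP}^{m-1}$ under the vector action of $Spin(m)$ on $\mathbf{R}^m$ is $Spin(m-1)$, and Lemma \ref{field} (or Corollary \ref{spin6}) supplies the $Spin(m)$-equivariant trivialization of the relevant normal/tangent-along-fibres bundle that makes this interior contribution an ordinary transfer rather than a twisted one; its contribution is $+\mathrm{Id}$ in the sense that, after restriction along $i_m$, the composite is the identity on that summand (the fibrewise Euler class / normal bundle being trivial, the transfer of a point is $1$). Assembling the three strata via the Feshbach formula gives
$$i^*_{m}Tr^*(j_{m})=Tr(j_{m-1})^*-Tr(i_{m-1})^*+\mathrm{Id},$$
with the minus sign on the middle term accounted for by the orientation of the normal bundle of the $[e_0]$-stratum inside the interval (equivalently, the two endpoints of the orbit interval enter with opposite signs in Feshbach's formula, and $[e_m]$ is taken with sign $+$).

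**Main obstacle.** The genuinely delicate point is the bookkeeping of signs and of the normal-bundle (Euler-class) twisting in the Feshbach formula: one must check that the generic orbit stratum really contributes an untwisted transfer (this is where equivariant parallelizability of $\mathbf{RP}^m$ restricted to $Spin(m)$, i.e.\ Lemma \ref{field}, is essential, since a priori the normal bundle of a stratum could carry a nontrivial Euler class that would kill or alter the contribution), and that the two endpoint strata enter with opposite signs so that the $Id$ and $-Tr(i_{m-1})^*$ appear as stated rather than with a coincidental cancellation. I would handle this by describing the orbit interval explicitly with its collar structure at each end, computing the normal representation of $Spin(m)$ at each of the two fixed lines $[e_0]$ and $[e_m]$, and invoking the equivariant vector fields of Lemma \ref{field} to trivialize the interior normal data; the sign then falls out of comparing the two collar orientations. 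Everything else — the identification of isotropy groups, the naturality statement $i^*_mTr^*(j_m)=$ transfer of \eqref{P} — is formal.
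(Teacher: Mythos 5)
Your overall plan --- apply Feshbach's double coset formula to the $Spin(m)$-action on the fibre $Spin(m+1)/Pin(m)=RP^m$ and read off one transfer term per orbit-type stratum --- is exactly the paper's approach, but your orbit analysis contains genuine errors. The $Spin(m)$-action on $RP^m$ has only \emph{one} fixed point, not two: $Spin(m)$ fixes the line spanning the trivial summand of $\mathbf{R}^{m+1}$ (the line through $i_0$ for $m=7$, through $i_7$ for $m=6$, etc.), and no line in the complementary $\mathbf{R}^m$ is invariant for $m\ge 2$; your claim that $[e_0]$ is ``fixed by all of $Spin(m+1)$'' is impossible, since the vector representation of $Spin(m+1)$ fixes no nonzero vector, and your two descriptions of $[e_m]$ (fixed by $Spin(m)$, yet with stabilizer $Pin(m-1)$) contradict each other. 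The correct picture, which is the paper's: one endpoint of the orbit interval is the single fixed point (isotropy $Spin(m)$), the other endpoint is the whole orbit $RP^{m-1}=Spin(m)/Pin(m-1)$ at latitude $t=\pi/2$, and each interior latitude is the sphere $S^{m-1}=Spin(m)/Spin(m-1)$ --- your identification $RP^{m-1}=Spin(m)/Spin(m-1)$ is false, that quotient is $S^{m-1}$.

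More seriously, you match the strata to the three terms incorrectly. In the double coset formula the stratum whose isotropy is all of $Spin(m)$ (the fixed point) is precisely the one contributing $Id$; the interior stratum, with isotropy $Spin(m-1)$, contributes the transfer $Tr(i_{m-1})^*$ of $BSpin(m-1)\to BSpin(m)$ (note that $i_{m-1}$ is this map, not the inclusion $Spin(m)\hookrightarrow Spin(m+1)$), weighted by the compactly supported Euler characteristic $\chi_c((0,\pi/2))=-1$, which is the sole source of the minus sign; the $t=\pi/2$ stratum, with isotropy $Pin(m-1)$, contributes $+Tr(j_{m-1})^*$. Your assignment (fixed point $\mapsto -Tr(i_{m-1})^*$, interior $\mapsto +Id$ ``because the transfer of a point is $1$'') inverts this: a stratum whose isotropy is a proper subgroup can never contribute the identity, and the sign is not an orientation or normal-bundle phenomenon --- in Feshbach's formula both endpoints enter with coefficient $+1$ and the open interior with $-1$. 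Consequently the appeal to Lemma \ref{field} and Corollary \ref{spin6} to ``untwist'' the interior contribution is neither needed nor does the work you assign to it; in the paper those equivariant vector fields are used later, in the proof of Lemma \ref{relevant} ii), not in this lemma, whose proof requires only the explicit parametrization of the orbits, their isotropy groups, and the double coset formula.
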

 
 \begin{proof}
 	
 	Lemma \ref {3-components} coincides with  Lemma 1.9 and Lemma 1.10 of \cite{B} for $m=4$  and $m=3$ respectively. However for all cases it is convenient to use the octonionic representation of Clifford algebras in Section 2. 
 	
 	By naturality of the transfer map $i^*_mTr(j_m)^*$ coincides with transfer homomorphism of \eqref{P}.
 	
 	Let $m=7$.
 	We consider $RP^{7}$ as $S^7_+=S^7 \cap \{x_{0}\geq 0\}$ with identified antipodal points in $S^{6}=S^7 \cap\{x_{0}= 0\}$. Parametrize $S^7_+$ as
 	$$
 	v=\cos t\cdot i_{0}+\sin t \cdot x , \,\,\, x \in S^{6}\subset Im \mathbb{O}, \,\,\,0\leq t \leq \pi/2.
 	$$

 	Then as above $i_{0}$ is invariant under action of $Spin(7)$ and we have
 	$$v'=i_k(i_7(\cos t\cdot i_{0}+\sin t\cdot x )i_7)i_k=\cos t\cdot i_{0}+\sin t \cdot i_k(i_7xi_7)i_k.$$

 	%
 	%
 	%
 	
 	So the orbit space of the  action of $Spin(7)$ on $RP^{7}$ is the line segment $[0,\pi/2]$: we have three types of orbits: the endpoint $t=0$ corresponds to the pole $e_{0}$, with the isotropy group $Spin(7)$.
 	The endpoint $t=\pi/2$ corresponds to the orbit $RP^6=\{ \pm x\}$, its points have the isotropy groups conjugate to  $Pin(6)$, the isotropy group of $\{\pm i_7\}$. Each point $t\in (0,\pi/2)$ corresponds to the orbit   $\cos t\cdot e_{0}+\sin t \cdot x$, the sphere, consisting of points with the isotropy group conjugate to $Spin(6)$. 
 	
 	Now let $m=6$ and consider $RP^{6}$ as $S^6_+=S(Im\mathbb{O}) \cap \{x_{7}\geq 0\}$ with identified antipodal points in $S^{5}=S^6 \cap\{x_{7}= 0\}$. Parametrize $S^6_+$ as
 		$$
 		v=\cos t\cdot i_{7}+\sin t \cdot x , \,\,\, x \in S^{5}, \,\,\,0\leq t \leq \pi/2.
 		$$  
 		As above $i_{7}$ is invariant under action of $\i_ji_k\in Spin(6)$ and we have
 		$$v'=i_ji_k(\cos t\cdot i_{7}+\sin t\cdot x )i_ki_j=\cos t\cdot i_{7}+\sin t \cdot i_j(i_kxi_k)i_j.$$ 
 	The orbit space of the  action of $Spin(6)$ on $RP^{6}$ is the line segment $[0,\pi/2]$ again: we have three types of orbits: the endpoint $t=0$ corresponds to the pole $e_{7}$, with the isotropy group $Spin(6)$.
 	The endpoint $t=\pi/2$ corresponds to the orbit $RP^5=\{ \pm x\}$, its points have the isotropy groups conjugate to $Pin(5)$, the isotropy group of $\{\pm i_6 \}$. Each point $t\in (0,\pi/2)$ corresponds to the orbit   $\cos t\cdot e_{7}+\sin t \cdot x$, the sphere, consisting of points with the isotropy group conjugate to $Spin(5)$. 
 	
 The proof for $m=5,4,3$ is identical and is left to the reader.

 \end{proof}

\medskip

Consider again the bundles \eqref{Sm-1} and \eqref{Pm-1}. Let  $\lambda \to P(\xi^{m-1})$ be the canonical real line bundle. $\lambda$ splits off the bundle $j^*_{m-1}(\xi^m)$ as the canonical direct summand.  Let $f_{m-1}$ be the classifying map of $\lambda$.

\begin{lemma} 
	\label{relevant}
	
	One has for the composition of the transfer map $Tr_m $ followed by the classifying map $f_m$ is zero  in symplectic cobordism 
	
	\begin{align*}
		i)\,\,\, &i_m^*Tr^*(j_{m}) f^*_{m}=Tr^*(j_{m-1})f^*_{m-1}, \,\,\,&&2\leq m\leq 7 ;\\
		ii) \,\,\, &Tr^*(j_{m})f^*_{m}=0,\,\,\,&&2\leq m\leq 6. 
	\end{align*}	
\end{lemma}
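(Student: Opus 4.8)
The plan is to derive i) from the three-term decomposition of Lemma~\ref{3-components}, keeping track of how the canonical line bundle restricts over each orbit type, and then to deduce ii) from i) by a short downward induction whose only non-formal ingredient is the case $m=6$.

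\emph{Part i).} By Lemma~\ref{3-components} the operator $i_m^*Tr^*(j_m)\colon MSp^*(BPin(m))\to MSp^*(BSpin(m))$ splits into three pieces, one per orbit type of the $Spin(m)$-action on the fibre $RP^m$; written with the restriction maps made explicit,
\[
i_m^*Tr^*(j_m)=Tr^*(j_{m-1})\,\psi_1^*-Tr^*(i_{m-1})\,\psi_2^*+\psi_0^*,
\]
where $\psi_0\colon BSpin(m)\to BPin(m)$, $\psi_1\colon BPin(m-1)\to BPin(m)$ and $\psi_2\colon BSpin(m-1)\to BPin(m)$ are induced by the inclusions of the isotropy groups $Spin(m)$, $Pin(m-1)$, $Spin(m-1)$ identified in the octonionic model of the proof of Lemma~\ref{3-components}. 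I would compose on the right with $f_m^*$ and evaluate the pullbacks $\psi_k^*\lambda$ of the canonical real line bundle $\lambda\to P(\xi^m)=BPin(m)$. At the $Spin(m)$-endpoint the group fixes that line pointwise (equivalently, pulled back to the sphere bundle the tautological line is trivialised by the tautological section), so $\psi_0^*\lambda$ is trivial and $\psi_0^*f_m^*=0$. On the interior orbit the connected isotropy $Spin(m-1)$ fixes the corresponding line of $\mathbb{R}^{m+1}$ pointwise, so again $\psi_2^*\lambda$ is trivial and $Tr^*(i_{m-1})\psi_2^*f_m^*=0$. At the $Pin(m-1)$-endpoint $\psi_1$ is, up to conjugacy and homotopy, the standard inclusion $BPin(m-1)=P(\xi^{m-1})\hookrightarrow P(\xi^m)$, which carries $\lambda$ to the canonical line of $P(\xi^{m-1})$, so $f_m\psi_1\simeq f_{m-1}$ and $\psi_1^*f_m^*=f_{m-1}^*$. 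Summing the three contributions gives $i_m^*Tr^*(j_m)f_m^*=Tr^*(j_{m-1})f_{m-1}^*$.

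\emph{Part ii).} Reading i) with $m$ replaced by $m+1$ gives $Tr^*(j_m)f_m^*=i_{m+1}^*\bigl(Tr^*(j_{m+1})f_{m+1}^*\bigr)$ for $2\le m\le 6$, so by downward induction it is enough to prove the single statement $Tr^*(j_6)f_6^*=0$, equivalently that $Tr^*(j_7)f_7^*$ restricts to zero along $i_7\colon BSpin(7)\to BSpin(8)$. This is the main obstacle. For it I would use the Becker-Gottlieb / Pontryagin-Thom form of the transfer, which factors through the Thom spectrum of minus the tangent bundle along the fibre, together with the fact that the class hit by $f_6^*$ is a characteristic class of the canonical line: by the case $k=6$ of Lemma~\ref{field} and Corollary~\ref{spin6} the fibrewise tangent bundle of the relevant $RP^7$-bundle splits off a trivial line summand carried by a $Spin$-equivariant vector field, and the work is to turn this trivialisation -- via the relations in $MSp^*(BPin(6))$ between the classes $f_6^*(\cdot)$ and the classes pulled back from $BSpin(7)$ -- into the vanishing of $Tr^*(j_6)f_6^*$. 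Everything else is bookkeeping with the double coset formula and with restrictions of the canonical line bundle.
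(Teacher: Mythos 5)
Your Part i) is essentially the paper's argument, and is in fact spelled out more carefully: the paper simply observes that $BSpin(m)$ is simply connected, so that the canonical line bundle pulls back trivially and only the $Tr^*(j_{m-1})$ term survives composition with $f_m^*$; your pointwise-fixed-line analysis at each orbit type reaches the same conclusion and makes the implicit restriction maps $\psi_k$ explicit, which the paper leaves to the reader. This part is correct and matches.

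Your Part ii) identifies the right ingredients (reduce by i) to a single case, Corollary~\ref{spin6}, the trivial summand of the fibrewise tangent bundle, the Becker--Gottlieb form of the transfer) but leaves the decisive step as ``the work is to turn this trivialisation $\dots$ into the vanishing of $Tr^*(j_6)f_6^*$,'' and that step is exactly where the content lies, so the proposal has a genuine gap. The paper closes it by a specific mechanism, not by bookkeeping: because $Spin(5)$-bundles are $MSp$-orientable (Gorbunov--Ray), the pullback of the fibrewise tangent bundle is $MSp$-orientable, and then by Becker--Gottlieb the transfer of the projective bundle coincides with Boardman's Umkehr map, which vanishes because Corollary~\ref{spin6} supplies a nowhere-zero section and hence a zero $MSp$-Euler class. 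You mention the Pontryagin--Thom picture but never convert the section of $\mathcal{T}_F$ into a nullhomotopy or a vanishing Gysin map, and the phrase ``via the relations in $MSp^*(BPin(6))$'' suggests a computation that neither you nor the paper actually performs. There is also a mismatch in where the restriction is taken: you want $Tr^*(j_7)f_7^*$ to vanish after restricting along $i_7$ to $BSpin(7)$, but the trivial summand of $\mathcal{T}_F$ only appears after restricting further to $BSpin(6)$ ($\mathcal{T}_F(\xi^7\oplus 1)=\pi^*\xi^7$ has no evident trivial summand, while $\mathcal{T}_F(\xi^6\oplus\mathbb{R}^2)=\tilde\pi^*\xi^6\oplus 1$ does); the paper accordingly pulls back to $BSpin(6)$ and then invokes Lemma~\ref{3-components} twice, for $m=7$ and $m=6$, to propagate the vanishing. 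Your induction scheme is sound, but you would need to restart it from the level where the vector field actually lives.
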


 \begin{proof}
 	
 	For i) apply Lemma \ref{3-components}. 
 	The pullback bundle $i_m^*Tr^*(j_{m})$
 	coincides with \eqref{P}.
 	Then $BSpin(m)$ is simply connected for $m\geq 2$: this is the consequence of the exact sequence of homotopy groups of the fibration $Spin(m)\to ESpin(m)\to BSpin(m)$ and the fact that $Spin(m)$ is path connected for $m\geq 2$ \cite{ABS}. So that only the first component of the transfer homomorphism in Lemma \ref{3-components} is relevant.


 	For ii) let $m=6$.  By Corollary \ref{spin6} transfer map $i_6^*Tr_7$ of 
 	$$
 	i_6^*(j_7)=P(\xi^6\oplus \mathbb{R}^{2})
 	$$
 	is trivial as its fiber $RP^7$ admits $Spin(6)$-equivariant vector field $(p,pi_7)$ and therefore the bundle along the fibers of 
 	$$
 	P(\xi^6\oplus \mathbb{R}^{2})\to BSpim(6)
 	$$
 	admits a section. 
 	Now apply again Lemma \ref{3-components} for $m=7$ and then $m=6$ to complete the proof of ii) for $m=6$.
 	
 	Then for $m<6$ the transfer homomorphism of 
 	$$
 	P(\xi^m\oplus \mathbb{R}^{8-m}),\,\,\,2\leq m < 6
 	$$ 
 	is trivial as the pullback of $i_6^*Tr_7$ by  $i_m$. Apply again Lemma \ref{3-components} for $m+1$ and $m$ to complete the proof of ii).

 \end{proof}

 Recall from \cite{GR} that any $Spin(5)$-bundle is $MSp$-orientable. 
 So is $\xi^m\oplus \mathbb{R}^{8-m}$ for $m\leq 5$. Therefore Lemma \ref{main lemma} implies that $\mathcal{T}_F(P(\xi^m\oplus \mathbb{R}^{8-m}))$ is $MSp$-orientable. Recall from \cite{BG} that in our situation the transfer homomorphism is expressed by Boardman's "Umkern" map which is zero because of zero Euler class.

 \section{Proof of Theorem \ref{main}}
 
 Here we follow the notations of \cite{B}. The proof of Theorem \ref{main} is organized as follows.

The tensor square of the canonical $Sp(1)$-bundle $\zeta \to BSp(1)$  has a trivial summand     
 $$
 \zeta\otimes_H \zeta^*=\Lambda+1,
 $$
 where $\Lambda \to BSp(1)$ 
 is the canonical $Spin(3)$-bundle. 
 
 Let $N$ be the normalizer of the torus $S^1=U(1)$ in $S^3=Sp(1)=Spin(3)$. 
 Clearly the bundle 
 $$p: BN\to BSp(1)=BSpin(3)$$
 is the projective bundle of $\Lambda$.
 The quotient map $N/U(1)=Z/2$  induces the map
 $$f:BN\to BZ/2,$$ 
 the classifying map of the canonical real line bundle 
 \begin{equation}
 \label{lambda^2}
 \lambda\to BN, \,\,\,\lambda^{\otimes 2}=1. 
 \end{equation}
 The pullback of $\Lambda$  splits canonically over  $BN$
 \begin{equation}
 \label{BN}
 p^*(\Lambda)=\lambda+\mu.  
 \end{equation}

 Lemma \ref{relevant} case $m=2$ implies 
 \begin{equation}
 \label{Trf} 
 Tr^*f^*=0 
 \end{equation}
 in symplectic cobordism, where $Tr$ is the transfer map of $p$. 
 
 Then it turns out \cite{GR}, \cite{B} p.4394, that  $\Lambda$ is $MSp$-orientable and the Thom class can be chosen in such a way that its restriction to the zero section is equal to 
 
 \begin{equation}
 \label{e(Lambda)}
 \tilde{e}(\Lambda)=\theta_1+\sum_i\phi_i{x^i},\,\,\,x=pf^i_1(\zeta).
 \end{equation}
 
 Now let $\Lambda_i$ be the pullback of $\Lambda$ induced by projection on $i$-th factor $BSp(1)^4\to BSp(1)$ and $\lambda$ be as above. Then (\cite{B}, Lemma 4.5) 
 
 \begin{align}
 	\label{msp}
 	&\lambda\otimes_{\mathbb{R}}\sum_1^4\Lambda_i\to B\mathbb{Z}/2\times BSp(1)^4 \text{  is $MSp$-orientable},\\	
 	\label{SC}
 	&\lambda\otimes_{\mathbb{R}}\sum_1^2\Lambda_i\to B\mathbb{Z}/2\times BSp(1)^2 \text{  is $SC$-orientable}.
 \end{align} 
 
 Because of \eqref{BN} and \eqref{lambda^2} the pullback of \eqref{msp}  over 
 $$(f,p)\times 1: BN \times BSp(1)^3\to B\mathbb{Z}/2\times BSp(1)^4$$
 has a trivial summand and therefore zero $MSp$-orientation  Euler class. 
 
 Similarly for the pullback of \eqref{SC} over 
 $$(f,p)\times 1: BN \times BSp(1)\to B\mathbb{Z}/2\times BSp(1)^2.$$

 Thus (\cite{B}, Lemma 4.6) one has in $MSp^*(BN \times BSp(1)^3)$
 
 \begin{align}
 	\label{1}
 	0=&\prod_{s=1}^4(\theta_i+\sum_{r\geq 1}\phi_r x_s^{2r})+
 	\sum_{m,n,p,q \geq 0}f^*(\gamma_{mnpq})x^m_1 x^n_2 x^p_3x^q_4\\
 	\label{2}
 	=&\sum_{i,j,k,l\geq 1} \phi_i\phi_j\phi_k\phi_lx_1^{2i}x_2^{2j}x_3^{2k}x_4^{2l}+\sum_{m,n,p,q \geq 0}f^*(\gamma_{mnpq})x^m_1 x^n_2 x^p_3x^q_4.
 \end{align}
 Here in \eqref{1} we use the relation $\theta_1\phi_i\phi_j=0$ of \cite{G}.
 
 Similarly one has in $SC^*(BN \times BSp(1))$

 \begin{align}
 	\label{3}
 	0=\sum_{i,j\geq 1} \phi_i\phi_jx_1^{2i}x_2^{2j}+\sum_{m,n \geq 0}f^*(\gamma_{mn})x^m_1 x^n_2.
 \end{align}

 Finally to complete the proof of Theorem \ref{main} i) apply \eqref{2} and \eqref{Trf}. 
 
 Similarly  apply \eqref{3} and \eqref{Trf} to complete the proof of Theorem \ref{main} ii).

\end{document}